\DeclareFontFamily{U}{wncy}{}
    \DeclareFontShape{U}{wncy}{m}{n}{<->wncyr10}{}
    \DeclareSymbolFont{mcy}{U}{wncy}{m}{n}
    \DeclareMathSymbol{\Sh}{\mathord}{mcy}{"58}
\begin{document}
\title{A mod-$p$ Artin-Tate conjecture, and generalized  Herbrand-Ribet}
\author{Dipendra Prasad}
\maketitle
{\hfill \today}


\theoremstyle{plain}

\newtheorem{theorem}{Theorem}
\newtheorem{proposition}{Proposition}
\newtheorem*{thm}{Theorem}
\newtheorem{cor}{Corollary}
\newtheorem{lemma}{Lemma}

\newtheorem{conjecture}[theorem]{Conjecture}


\theoremstyle{definition}
\newtheorem{remark}{Remark}
\newtheorem{definition}{Definition}
\newtheorem{question}{Question}

\def\A{{\mathbb A}}
\def\C{{\mathbb C}}                               
\def\F{{\mathbb F}}                               
\def\Ga{{\mathbb G}_a}                            
\def\Gm{{\mathbb G}_m}                            
\def\H{{\mathbb H}}                               
\def\K{{\mathbb K}}
\def\L{\mathcal L}
\def\M{{\mathbb M}}                               
\def\N{{\mathbb N}}                               
\def\O{{\mathcal O}}
\def\Q{{\mathbb Q}}                               
\def\R{{\mathbb R}}                               
\def\Z{{\mathbb Z}}                               
\def\PP{{\mathbb P}}                               

\def\k{{\overline{k}}}                            
\def\m{\mathfrak{m}}
\def\p{\mathfrak{p}}
\def\wa{\mathfrak{a}}
\def\wc{\mathfrak{c}}
\def\wf{\mathfrak{f}}
\def\Aut{\operatorname{Aut}}                      
\def\End{{\operatorname{End}}}                    
\def\GL{\operatorname{GL}}                        
\def\GSO{\operatorname{GSO}}                        
\def\GSp{\operatorname{GSp}}                        
\def\Gal{\operatorname{Gal}}                      
\def\Hom{\operatorname{Hom}}                      
\def\Ind{\operatorname{Ind}}
\def\ind{\operatorname{ind}}

\def\P{{\mathbb{P}}}
\def\PSL{\operatorname{PSL}}                      
\def\PSO{\operatorname{PSO}}                      
\def\PSU{\operatorname{PSU}}                      
\def\PGL{\operatorname{PGL}}                      
\def\PSp{\operatorname{PSp}}                      
\def\Rad{\mathfrak{Rad}}
\def\Rep{\mathfrak{Rep}}
\def\Res{\operatorname{Res}}
\def\SL{\operatorname{SL}}           
\def\alg{\operatorname{alg}}           
\def\SL{\operatorname{SL}}                        
\def\SO{\operatorname{SO}}                        
\def\SU{\operatorname{SU}}                        
\def\Sp{\operatorname{Sp}}                        
\def\Spin{\operatorname{Spin}}                    
\def\Sym{\operatorname{Sym}}
\def\Sel{\operatorname{Sel}}
\def\Or{\operatorname{O}}                         
\def\Un{\operatorname{U}}                         
\def\Ps{\operatorname{Ps}}                         

\def\Fr{\operatorname{Fr}}                         
\def\det{\operatorname{det}}                      
\def\tr{\operatorname{tr}}                        
\def\into{{~\hookrightarrow~}}                    
\def\iso{{\stackrel{\sim}{~\longrightarrow~}}}    
\def\lra{{~\longrightarrow~}}                     %
\def\onto{{~\twoheadrightarrow~}}                 
\def\op{{\oplus}}                                 
\def\ot{{\otimes}}                                
\def\rad{\mathfrak{rad}}                          %
\def\sdp{{~\mathbin{{\triangleright}\!{<}}}}      
\def\siso{{\stackrel{\sim}{~\rightarrow~}}}       

\def\v2{{\vskip2mm}}

\begin{abstract}
Following the natural instinct that when a group operates on a number field then every term
in the class number formula should factorize `compatibly' according to the
representation theory (both complex and modular) of the group, we are led ---
in the spirit of Herbrand-Ribet's theorem on the $p$-component of the class number of $\Q(\zeta_p)$ --- 
to some natural questions
about the $p$-part of the classgroup of any CM Galois extension of $\Q$ as a module for 
$\Gal(K/\Q)$.
The compatible factorization of the class number formula 
is at the basis of {\it Stark's conjecture}, where one is mostly interested in factorizing the regulator term --- whereas 
for us in this paper, we put ourselves in a situation where the regulator term can be ignored, and it is the factorization   
of the classnumber that we seek. 
All this is  presumably part of various `equivariant' conjectures in arithmetic-geometry, such 
as `equivariant Tamagawa number conjecture',  but the literature does not seem to address this question in any  precise 
way. In trying to formulate these questions, we are naturally led to consider $L(0,\rho)$, for $\rho$ an Artin representation, 
 in situations where this is known to be nonzero and algebraic, and it is important for us 
to understand if this is $\p$-integral for a 
prime $\p$ of the ring of algebraic integers $\bar{\Z}$ in $\C$, that we call {\it mod-$p$ Artin-Tate conjecture}.
As an attentive reader will notice, the most innocuous term in the class number formula, 
the number of roots of unity, plays an important role for us --- it being the only term in the denominator, is responsible for all the poles!  

\end{abstract}
\tableofcontents

\section{Introduction}

Let $F$ be a number field contained in $\C$ with $\bar{\Q}$ its algebraic closure in $\C$. 
Let $\rho: \Gal(\bar{\Q}/F) \rightarrow \GL_n(\C)$ 
be an irreducible Galois representation with $L(s,\rho)$ 
  its associated Artin $L$-function. According to a famous conjecture of Artin, 
$L(s, \rho)$ has an analytic continuation to 
 an entire function on $\C$ unless $\rho$ is the trivial representation, in which case it has a 
unique pole at $s=1$ which is simple.

More generally, let $M$ be an irreducible  motive over $\Q$ with $L(s,M)$ its associated $L$-function. According to Tate,
$L(s,M)$  has  an analytic continuation to 
 an entire function on $\C$ unless $M$ is a twisted Tate motive $\Q[j]$ with $\Q[1]$ the motive associated to $\Gm$. 
For the motive $\Q = \Q[0]$, $L(s, \Q) = \zeta_\Q(s)$, the usual Riemann zeta function, which has a 
unique pole at $s=1$ which is simple.

This paper will deal with certain Artin representations $\rho: \Gal(\bar{\Q}/F) \rightarrow \GL_n(\C)$ for which we will know a priori that $L(0, \rho)$ is a nonzero
algebraic number (in particular, $F$ will be totally real). It is then an important question to understand the nature of the algebraic number $L(0,\rho)$:  to know if it is an 
algebraic integer, but if not, what are its possible denominators. We think of the possible denominators 
in $ L(0,\rho),$ as existence of poles for $L(0,\rho),$ at the corresponding prime ideals of $\bar{\Z}$.  It is thus 
analogous to the conjectures of Artin and Tate, both in its aim ---  and as we will see ---  in its formulation.  
Since we have chosen to understand $L$-values at 0 instead of 1 which is where Artin and Tate conjectures are formulated, 
there is an `ugly' twist by $\omega_p$ --- the action of $\Gal(\bar{\Q}/\Q)$ on the $p$-th roots of unity --- 
throughout the paper, giving a natural character  $\omega_p: \Gal(\bar{\Q}/\Q) \rightarrow (\Z/p)^\times$, also a character of 
$\Gal(\bar{\Q}/L)$ for $L$ any algebraic extension of $\Q$, 
as well as a character of $\Gal(L/\Q)$ 
if $L$ is a Galois  extension of $\Q$ containing $p$-th roots of unity; if there are no non-trivial
$p$-th roots of unity in $L$, we will define $\omega_p$ to be the trivial character of $\Gal(L/\Q)$.

We now
fix some notation. We will fix an isomorphism of $\bar{\Bbb Q}_p$ 
with $\C$ where $\bar{\Bbb Q}_p$ is
 a fixed algebraic closure of ${\Bbb Q}_p$, the field
of $p$-adic numbers. This allows one to define $\p$, a prime ideal in  $\bar{\Bbb Z}$,  the integral closure of
${\Bbb Z}$ in ${\Bbb C}$, over the prime ideal generated by $p$ in $\Z$. The prime $p$ will always be an odd prime in this paper.

All the finite dimensional representations of finite 
groups in this paper will take values 
in $\GL_n(\bar{\Bbb Q}_p)$,
and therefore in $\GL_n({\C})$, as well as $\GL_n(\bar{\Bbb Z}_p)$.
It thus makes sense to talk of `reduction modulo $\p$' of (complex)
representations of finite groups. These reduced representations are
well defined up to semi-simplification on vector spaces over
$\bar{\Bbb F}_p$ (theorem of Brauer-Nesbitt); we denote the reduction modulo
$\p$ of representations as $\rho \rightarrow \bar{\rho}$.

If $F$ is a finite Galois extension of ${\Bbb Q}$ with Galois group $G$, 
then it
is well-known that 
the zeta function $\zeta_F(s)$ can be factorized as
$$\zeta_F(s) = \prod_{\rho} L(s,\rho)^{\dim \rho},$$
where $\rho$ ranges over all the irreducible complex representations of $G$, 
and $L(s,\rho)$ denotes the Artin $L$-function associated to $\rho$.

According to the class number formula, we have,
$$\zeta_F(s) = -\frac{hR}{w}s^{r_1+r_2-1}+ 
{\rm
~~higher~order~terms},$$
where $r_1,r_2, h, R, w$ are the standard invariants associated to $F$: 
$r_1$, the number of real embeddings; $r_2$, number of pairs of 
complex conjugate embeddings which are not real;
$h$, the class number of $F$; $R$, the regulator, and $w$ the number of 
roots of unity in $F$. 

This paper considers $\zeta_E/\zeta_F$ where $E$ is a CM field with $F$ its totally real subfield, in which case 
$r_1+r_2$ is the same for $E$ as for $F$, and the regulators of $E$ and $F$ too are the same except for a possible power of 2.
Therefore for $\tau$ the complex conjugation on $\C$,
$$\zeta_E/\zeta_F(0)= \prod_{\rho(\tau)=-1} L(0,\rho)^{\dim \rho} = \frac{h_E/h_F}{w_E/w_F},$$
where each of the $L$-values $L(0,\rho)$ in the above expression 
are nonzero algebraic numbers by a theorem of Klingen and Siegel.

In this identity, observe that $L$-functions are associated to $\C$-representations of $\Gal(E/\Q)$, whereas the classgroups
of $E$ and $F$ are finite Galois modules.  Modulo some details, we basically assert that for each odd prime $p$,
each irreducible $\C$-representation $\rho$  of $\Gal(E/\Q)$ contributes a certain number of copies (depending on $\p$-adic valuation of $L(0,\rho)$) of 
$\bar{\rho}$ to the classgroup of $E$ tensored with $\F_p$ modulo the classgroup of $F$ tensored with $\F_p$ (up to semi-simplification).  
 This is exactly what happens for $E=\Q(\zeta_p)$ by the theorems of Herbrand and Ribet which is one of the main
motivating example for all that we do here, and this is what we will review next.
\section{The Herbrand-Ribet theorem}
In this section we recall the Herbrand-Ribet theorem from the point of view of this paper. 
We refer to \cite{Ri1} for the original work of Ribet, and \cite{Was} 
for an exposition on the theorem together with a proof of Herbrand's theorem. 

There are actually 
two a priori important aspects of  the Herbrand-Ribet theorem  dealing with the $p$-component of the classgroup for $\Q(\zeta_p)$. First,  the 
Galois group $\Gal(\Q(\zeta_p)/\Q) = (\Z/p)^\times$, being
a cyclic group of order $(p-1)$, its action on the $p$-component of the classgroup is semi-simple,
and the $p$-component of the classgroup 
can be written as a direct sum of eigenspaces for $(\Z/p)^\times$. We do not consider this aspect of 
Herbrand-Ribet theorem to be important, and simply consider semi-simplification of representations 
of Galois group on classgroups to be a good enough substitute.

The second --- and more serious --- aspect of Herbrand-Ribet theorem is that among the characters of 
$\Gal(\Q(\zeta_p)/\Q) = (\Z/p)^\times$, only the odd characters, i.e.,
characters $\chi: (\Z/p)^\times \rightarrow \Q_p^\times$ 
with $\chi(-1)=-1$ present themselves --- as it is only for these that there is any result about the $\chi$-eigencomponent 
in the classgroup, and even among these, the Teichm\"uller character
$\omega_p: (\Z/p)^\times \rightarrow \Q_p^\times$ plays a role different from other characters of $(\Z/p)^\times$.
(Note that earlier we have used $\omega_p$ for the action of $\Gal(\bar{\Q}/\Q)$ on the $p$-th roots of unity, 
giving a natural character  $\omega_p: \Gal(\bar{\Q}/\Q) \rightarrow (\Z/p)^\times$, 
as well as to its restriction to  
$\Gal(\bar{\Q}/L)$ for $L$ any algebraic extension of $\Q$. Since $\Gal({\Q(\zeta_p)}/\Q)$ is canonically 
isomorphic to $ (\Z/p)^\times$, the two roles that $\omega_p$ will play throughout the paper are actually the same.)

To elaborate on this aspect (the role of  `odd' characters in Herbrand-Ribet theorem), 
observe that the class number formula
$$\zeta_F(s) = -\frac{hR}{w}s^{r_1+r_2-1}+ 
{\rm
~~higher~order~terms},$$  
can be considered both for $F = \Q(\zeta_p)$ 
as well as its maximal real subfield $F^+ = \Q(\zeta_p)^+$. It is known that, cf. Prop 4.16 in \cite{Was}, 
$$R/R^+ = 2^{\frac{p-3}{2}},$$
where $R$ is the regulator for $\Q(\zeta_p)$ and $R^+$ is the regulator for $\Q(\zeta_p)^+$. We will similarly denote
$h$ and $h^+$ to be the order of the two class groups, with $h^- = h/h^+$, an integer.
 
Dividing the class number formula of $\Q(\zeta_p)$ by that of  $\Q(\zeta_p)^+$,
we find,
$$\prod_{\chi {\rm ~an ~odd~ character~ of~ } (\Z/p)^\times }L(0, \chi) = \frac{1}{p} \cdot \frac{h}{h^+} \cdot 2^{\frac{p-3}{2}}
,\quad \quad \quad \quad \hfill(*)$$
the factor $1/p$ arising because there are $2p$ roots of unity in $\Q(\zeta_p)$ and only $2$
in $\Q(\zeta_p)^+$.

It is known that for $\chi$ an odd character 
of $ (\Z/p)^\times $, $L(0, \chi) $ is an algebraic number which 
is given in terms of the generalized Bernoulli number $B_{1,\chi}$ as follows: $$L(0, \chi) = -B_{1,\chi} = - \frac{1}{p}\sum_{a=1}^{a=p} a \chi(a).$$

It is easy to see that $pB_{1,\omega_p^{p-2}} \equiv (p-1) \bmod p$ since $a\omega_p^{p-2}(a)$ is the trivial character of 
$(\Z/p)^\times$ whereas for all the other characters of $ (\Z/p)^\times $, $L(0, \chi) $ is not only an algebraic 
number but is $p$-adic integral (Schur orthogonality!); all this is clear by looking at the expression:
 $$L(0, \chi) = -B_{1,\chi} = - \frac{1}{p}\sum_{a=1}^{a=p} a \chi(a).$$

Rewrite the equation $(*)$ up to $p$-adic units
as,
$$\prod_{\begin{array}{cc} \chi {\rm ~an ~odd~ character~ of~ } (\Z/p)^\times \\ \chi \not = \omega_p^{p-2} = \omega_p^{-1}\end{array}}L(0, \chi) = \frac{h}{h^+},$$
where we note that both sides of the equality are $p$-adic integral elements; in fact, since all 
characters $\chi: (\Z/p)^\times \rightarrow \bar{\Q}_p^\times$ take values in $\Z_p$, for $\chi \not = \omega_p^{-1}$, 
$L(0,\chi) \in \Z_p$. 
This when interpreted --- just an interpretation in the optic of this paper without any suggestions for proof in either direction! --- for each $\chi$ component on the two sides of this equality
amounts to the theorem of Herbrand and  Ribet which asserts that  $p$ divides  $L(0,\chi)=-B_{1,\chi}$ 
for $\chi 
$ an odd character of $ (\Z/p)^\times $, which is not $\omega_p^{p-2}$,
if and only if  the corresponding 
$\chi^{-1}$-eigencomponent of the classgroup of $\Q(\zeta_p)$ is nontrivial. (Note the $\chi^{-1}$, and not $\chi$!)
Furthermore, the character $\omega_p$ does not appear in the $p$-classgroup of $\Q(\zeta_p)$. It can happen that $L(0,\chi)$ 
is divisible by higher powers of $p$ than 1, and one expects --- this is not proven yet! --- that in such cases, the corresponding
$\chi^{-1}$-eigencomponent of the classgroup of $\Q(\zeta_p)$ is $\Z/p^{({\rm val}_pL(0,\chi))}$, in particular, it still 
has $p$-rank 1. (By 
Mazur-Wiles \cite{Ma-Wi}, $\chi^{-1}$-eigencomponent of the classgroup of $\Q(\zeta_p)$ is of order $p^{({\rm val}_pL(0,\chi))}$.) 
 
The work of Ribet was to prove that if 
$p| B_{1,\chi}$, $\chi^{-1}$-eigencomponent of the classgroup of $\Q(\zeta_p)$ is nontrivial by constructing an unramified
extension of $\Q(\zeta_p)$ by using a congruence between a holomorphic cusp form and an Eisenstein series on $\GL_2(\A_\Q)$.

To be able to use the class number formula in other situations, we will 
need to have the integrality of $L(0,\chi)$ for $\chi$ a character associated to the Galois group of a number field, or even of $L(0,\rho)$ for general irreducible representations $\rho$ of the Galois group of a number field,
 in more situations that we call mod $p$ Artin-Tate conjecture. We begin with the following lemma.

\begin{lemma} \label{class}
Let $\ell, p$ be any  two odd primes ($\ell = p$ is allowed). Let $F$ be a totally real number field, and $E=F(\zeta_{p^n})$ be a quadratic CM extension of $F$. Let $h_{\ell}^-(E)$ 
be the order of the $\ell$-primary part of the classgroup of $E$ on which complex conjugation acts by $-1$; define $h_{\ell}^-(\Q(\zeta_{p^n}))
$ similarly.  Then
$$h_{\ell}^-(\Q(\zeta_{p^n}))| h_{\ell}^-(E).$$ 
\end{lemma}
  \begin{proof}By classfield theory,  it suffices to prove that a cyclic degree $\ell$  unramified extension, say $L$, of $\Q(\zeta_{p^n})$ on which complex conjugation acts by $-1$ on $\Gal(L/\Q(\zeta_{p^n})) \cong \Z/\ell$ 
when inflated to $E$ remains cyclic of degree $\ell$, i.e., 
the degree of $LE/E$ is $\ell$.  Assuming the contrary, we have $LE=E$, and since 
degree of $E/F$ is 2, and $p$ is odd, we must have $LF=F$, i.e., $L\subset F$,
therefore complex conjugation must act trivially on $L$. On the other hand, 
we know that complex  conjugation does not act trivially on $L$. \end{proof}
\begin{remark} It may be noted that we are not asserting that 
if $\Q(\zeta_{p^n}) \subset E $, $h(\Q(\zeta_{p^n}))| h(E).$ \end{remark}

Let $E$ be a CM number field which we assume is Galois over $\Q$. 
Assume that $E$ contains $p^n$-th roots of unity but no $p^{n+1}$-st root of unity. Let $F$ be the totally
real subfield of $E$ with $[E:F]=2$. Let $G={\Gal}(E/Q)$ 
with $-1 \in G$, the complex conjugation in $G$.

We have,
$$\zeta_E(s) = \prod_{\rho} L_{\Q}(s, \rho)^{\dim \rho},$$
$$\zeta_{F}(s) = \prod_{\rho(-1)=1} L_{\Q}(s, \rho)^{\dim \rho},$$
$$\zeta_E/\zeta_{F}(s) = \prod_{\rho(-1)=-1} L_{\Q}(s, \rho)^{\dim \rho},$$
where all the products above are over irreducible representations $\rho$ of $G={\Gal}(E/\Q)$.

By the class number formula,
$$h^-(E)/{p^n} = \prod_{\rho(-1)=-1} L_{\Q}(0, \rho)^{\dim \rho}\quad(\star1).$$

Similarly,
$$h^-(\Q(\zeta_{p^n}))/{p^n} =
 \prod_{\chi(-1)=-1} L_{\Q}(0, \chi)\quad(\star2).$$

Dividing the equation $(\star1)$ by $(\star2)$, we have,
 $$h^-(E)/h^-(\Q(\zeta_{p^n}))  = \prod_{
\begin{array}{c}
\rho(-1) =  -1,  \\
\rho \not = \chi    
          \end{array} } L_{\Q}(0, \rho)^{\dim \rho}\quad(\star3),$$
where  the product on the right is taken  over irreducible representations $\rho$ of $G={\Gal}(E/\Q)$ 
for which $\rho(-1)=-1$, and  which are not cyclotomic characters 
of the form $\chi: \Gal(\Q(\zeta_{p^n})/\Q)= (\Z/p^n)^{\times}  \rightarrow  \C^\times$.

It is known that $L(0,\rho) \in \bar{\Bbb Q}^\times$ for $\rho(-1) =-1$.
This is a simple consequence of a theorem due to Klingen and Siegel that partial zeta
functions of a totally real number field take rational values at all 
non-positive integers,
cf. Tate's book \cite{Tate}.  (Note that to prove  $L(0,\rho) \in \bar{\Bbb Q}^\times$ for $\rho(-1) =-1$, it suffices by
Brauer to prove it for abelian CM extensions by a Lemma of Serre 
cf. Lemma 1.3 of Chapter III of Tate's book \cite{Tate}.)

By Lemma \ref{class}, the left hand side of the equation $(\star3)$ is integral (except for powers of 2), and we would like
to suggest the same for  each term on the right hand side of the equation $(\star3)$.

The following conjecture about $L(0,\rho)$ extends the known integrality properties  of $L(0, \chi) = -B_{1,\chi} = - \frac{1}{p}\sum_{a=1}^{a=p} a \chi(a),$ 
encountered and used earlier. The formulation of the conjecture also assumes known integrality properties
about $L(0,\chi)$ for $\chi: \Gal(\Q(\zeta_n)/\Q)= (\Z/n)^\times \rightarrow \C^\times$ discussed in the last section of this paper.

\begin{conjecture} \label{artin-tate} (mod $p$ analogue of the Artin-Tate conjecture) Let $\rho$ be an irreducible representation of $\Gal(\bar{\Q}/\Q)$ cutting out a CM extension 
$E$ of $\Q$ with $\rho(-1)=-1$ where $-1$ is the complex conjugation in $\Gal(E/\Q)$. Then unless $\rho$ is a one dimensional
representation 
factoring through $\Gal(\Q(\zeta_{p^n})/\Q)$ (for some prime $p$) with $\bar{\rho}$ the reduction of $\rho$ modulo $p$ being $\bar{\rho}= \omega_p^{-1}$, $L(0,\rho) \in \bar{\Q}$ 
is integral outside 2, i.e., $L(0,\rho) \in \bar{\Z}[\frac{1}{2}].$ 

\end{conjecture}

We next recall the following theorem of
Deligne-Ribet, cf. \cite{DR}, which could be considered as a weaker version of Conjecture \ref{artin-tate}.

\begin{thm}
Let $F$ be a  totally real number field, and  let 
$\chi: \Gal(\bar{\Q}/F) \rightarrow \bar{\Q}^\times$ be a character of finite order cutting out a 
CM extension $K$ of $F$ (which is not totally real). Let $w$ be the order of the group of roots of unity in $K$. Then,
$$wL(0,\chi) \in \bar{\Z}.$$
\end{thm}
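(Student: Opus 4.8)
The plan is to prove the statement locally, one rational prime at a time: after fixing a prime $\p$ of $\bar\Z$ lying over $p$, I will bound the $\p$-denominator of $L(0,\chi)$ by the $p$-part of $w$. Since $\chi$ is one-dimensional, $L(s,\chi)$ is the Hecke $L$-function of the totally real field $F$ attached to the finite-order ray class character $\chi$ of some conductor $\mathfrak f$, and it decomposes as $L(s,\chi)=\sum_{\mathfrak c}\chi(\mathfrak c)\,\zeta_F(s,\mathfrak c)$, the sum running over ray classes modulo $\mathfrak f$. The theorem of Siegel--Klingen that the partial zeta values $\zeta_F(0,\mathfrak c)$ are rational then gives $L(0,\chi)\in\bar\Q$ for free, so the entire content is the control of the denominator.

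The technical heart is a smoothed integrality statement: for every ideal $\mathfrak a$ of $F$ prime to $\mathfrak f p$,
$$\big(\chi(\mathfrak a)\,N\mathfrak a-1\big)\,L(0,\chi)\ \in\ \bar\Z_p ,$$
equivalently, it is the integrality of the $\mathfrak a$-smoothed Stickelberger element of $K/F$ at $0$. This is exactly where I would run the method of \cite{DR}. One realizes the smoothed partial zeta values as the constant term of a Hilbert modular Eisenstein series over $F$: for weight $k\ge 2$ the normalized Eisenstein series attached to $\chi$ has constant term a simple multiple of $L(1-k,\chi)$ (up to the standard Teichm\"uller twist), while its remaining Fourier coefficients are $\bar\Z_p$-combinations of values of $\chi$ on integral ideals, hence algebraic integers; the parity needed for a nonzero form is guaranteed because $\chi$ is totally odd. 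Applying the $q$-expansion principle for Hilbert modular forms then transfers the integrality of the nonconstant coefficients to the constant term, the only loss being the bounded denominator encoded in the comparison of cusps. The genuine obstacle is that the value we want sits at weight one ($k=1$), where no classical holomorphic Eisenstein series is available; this is overcome by $p$-adically interpolating the integral weight-$k$ Eisenstein series into a bounded $\bar\Z_p$-valued measure and specializing, and it is the construction and integrality of this Eisenstein measure --- the Deligne--Ribet $p$-adic $L$-function --- that constitutes the real work. (Alternatively one may obtain the same smoothed congruences from Shintani's cone decomposition of the partial zeta functions, as in the approach of Cassou-Nogu\`es.)

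Granting the smoothed statement, I would then remove the factor $\chi(\mathfrak a)N\mathfrak a-1$ by a Chebotarev argument, and this is where the number $w$ enters transparently. The assignment $\sigma_{\mathfrak a}\mapsto N\mathfrak a$ is the $p$-adic cyclotomic character $\kappa\colon\Gal(\bar\Q/F)\to\Z_p^\times$, so $\mathfrak a\mapsto\chi(\mathfrak a)N\mathfrak a$ is the character $\psi=\chi\kappa$, whose reduction modulo $\p$ is $\bar\chi\,\omega$. If $\bar\chi\neq\omega^{-1}$ --- precisely the hypothesis of Conjecture 1 that $\omega\otimes\bar\chi$ omits the trivial character --- then $\bar\chi\,\omega\neq 1$, and Chebotarev supplies a prime $\mathfrak q$ with $\chi(\mathfrak q)N\mathfrak q\not\equiv 1\bmod\p$; the smoothing factor is a $\p$-unit and $L(0,\chi)$ is itself $\p$-integral. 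If instead $\bar\chi=\omega^{-1}$, then $\omega$ factors through $\Gal(K/F)$, so $\mu_p\subset K$; let $p^{a}$ be the exact $p$-part of $w$, so that $\mu_{p^{a}}\subset K$ but $\mu_{p^{a+1}}\not\subset K$. Working in $M=K(\mu_{p^{a+1}})$, the group $\Gal(M/K)$ acts nontrivially on $\mu_{p^{a+1}}$ while fixing $\mu_{p^{a}}$, so it contains an element on which $\kappa-1$ has $p$-adic valuation exactly $a$; lifting it to a Frobenius via Chebotarev yields a prime $\mathfrak q$, prime to $\mathfrak f p$, that splits completely in $K$ --- whence $\chi(\mathfrak q)=1$ --- and satisfies $v_p(N\mathfrak q-1)=a$. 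For this $\mathfrak q$ the smoothing factor is $N\mathfrak q-1=p^{a}\cdot(\text{unit})$, so the smoothed statement gives $p^{a}L(0,\chi)\in\bar\Z_p$, and since $p^{a}\mid w$ we conclude $wL(0,\chi)\in\bar\Z_p$.

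Running this over every prime $\p$ gives $wL(0,\chi)\in\bar\Z$. To summarize the division of labor: the passage from the smoothed value to the sharp denominator $w$ is a clean computation governed entirely by how deeply the $p$-power roots of unity of $K$ sit in the cyclotomic tower --- the same mechanism the introduction identifies as the sole source of poles --- whereas the step I expect to be hardest is the smoothed integrality itself, which rests on the $\bar\Z_p$-integrality of the Deligne--Ribet Eisenstein measure, equivalently on the $q$-expansion principle controlling the weight-one constant terms through $p$-adic interpolation.
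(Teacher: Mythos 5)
The paper offers no proof of this statement: it is quoted verbatim as the theorem of Deligne--Ribet, with a citation to \cite{DR}, so there is no internal argument to compare yours against. Measured instead against the actual Deligne--Ribet proof, your outline is faithful and correctly structured. The endgame --- deducing $wL(0,\chi)\in\bar{\Z}$ from the smoothed integrality $(\chi(\mathfrak{a})N\mathfrak{a}-1)L(0,\chi)\in\bar{\Z}_p$ by splitting into the cases $\bar{\chi}\omega\neq 1$ (where a Chebotarev prime makes the smoothing factor a $\p$-unit) and $\bar{\chi}=\omega^{-1}$ (where $\mu_p\subset K$, and a prime split in $K$ but nontrivial on $\mu_{p^{a+1}}$ gives a smoothing factor of valuation exactly $a=v_p(w)$) --- is carried out correctly and is the standard way the annihilator of $\mu_K$ enters; it also matches the paper's guiding slogan that the roots of unity are the sole source of denominators, and dovetails with the hypothesis of Conjecture 1. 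The one caveat is that the entire analytic content, the smoothed integrality itself, is taken as a black box: you describe the Eisenstein-measure and $q$-expansion-principle machinery (or the Shintani/Cassou-Nogu\`es alternative) accurately but do not execute it, as you acknowledge. So this is a correct reduction of the theorem to the hard core of \cite{DR}, not an independent proof --- which is a reasonable division of labor here, since the paper itself treats the result as imported.
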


In fact Conjecture \ref{artin-tate} can be used to make precise the above  theorem of Deligne-Ribet as follows; 
the simple argument 
using the fact that the Artin $L$-function is invariant under induction from $\Gal(\bar{\Q}/F)$ to 
$\Gal(\bar{\Q}/\Q)$ will be left to the reader.

\begin{conjecture} 
Let $F$ be a totally real number field, and $\chi:\A_F^\times/F^\times \rightarrow \bar{\Z}_p^\times$ 
a finite order character, cutting out a non-real but CM extension. Then if $L_F(s,\chi) \not \in \bar{\Z}_p$,  
\begin{enumerate}
\item $\chi$ mod $p$ is $\omega_p^{-1}$.

\item $\chi$ is a character of $\A_F^\times/F^\times$ associated to a character of the Galois group
$\Gal( F(\zeta_q)/F)$ for some $q$ which is a power of $p$.
\end{enumerate}

\end{conjecture}

\begin{remark} In the examples that I know, which are for  characters $\chi: \Gal(\bar{\Q}/\Q) 
\rightarrow \bar{\Q}_p^\times$ 
with $\chi = \omega_p^{-1}$ (mod $p$), if $L(0,\chi)$ has a (mod $p$) pole, the pole is of order 1; 
more precisely, if $L= {\Q}_p[\chi(
 \Gal(\bar{\Q}_p/\Q_p) )]$ is the subfield of $\bar{\Q}_p$ generated by the image under $\chi$ of the decomposition group at $p$, then $L(0,\chi)$ is the 
inverse of a uniformizer of this field $L$. It would be nice to  know if this is the case for 
characters $\chi$ of  $\Gal(\bar{\Q}/F)$ for $F$ arbitrary.  
This would be 
in the spirit of classical Artin's conjecture where the only possible poles of $L(1,\rho)$, for $\rho$ an irreducible representation of  $\Gal(\bar{\Q}/F)$, are simple.

\end{remark}

\section{Proposed generalization of  Herbrand-Ribet for CM number fields} 
The Herbrand-Ribet theorem is about the relationship of $L$-values $L(0,\chi)$ with the $\chi^{-1}$-eigencomponent 
of the classgroup of $\Q(\zeta_p)$. In the last section, we have proposed a precise conjecture about 
integrality properties for the $L$ values $L(0,\rho)$. In this section, we now propose their relationship to classgroups.

We begin by  introducing some notation involved in  constructing in a functorial way an elementary abelian $p$-group $\overline{A}[p]$ 
out of a finite abelian group
$A$ with 
\begin{enumerate}
\item  $p \cdot \overline{A}[p] = 0$,
\item the cardinality of $\overline{A}[p]$ 
equals the cardinality of the $p$-Sylow subgroup of $A$.
\end{enumerate}
  
We define $\overline{A}[p]$ to be the direct sum of the $p$-groups: $p^iA/{p^{i+1}A}$ for $i \geq 0$.
If $A$ is $G$-module, then naturally, $\overline{A}[p]$ 
too is a $G$-mdule.  If $A$ is a $G$-module, 
then we let $\overline{A}[p]^{ss}$ be the semi-simplification of the corresponding $G$-module $\overline{A}[p]$ over $\F_p$. 

Since according to  the theorem of Klingen and Siegel, the value $L(0,\rho)$ for an odd representation $\rho$ of 
$\Gal(\bar{\Q}/k)$, where $k$ is a totally real number field, belongs to the algebraic number field 
generated by the character values of $\rho$, and since we are trying to equate powers of $p$ appearing on the two sides
of the class number formula, it will be important to consider only those representations
 $\rho: \Gal(\bar{\Q}/\Q) \rightarrow \GL_n(\bar{\Q}_p)$ which actually take values in $\Q_p^{\rm unr}$, the maximal 
unramified extension of $\Q_p$. Observe that the Brauer group of $\Q_p^{\rm unr}$ is trivial, and thus an irreducible
 representation of a finite group is defined over $\Q_p^{\rm unr}$ 
if and only if its character is defined  over $\Q_p^{\rm unr}$. If an irreducible representation $\pi$ of a finite group 
is defined over  $\bar{\Q}_p$, one can  take the sum of the Galois conjugates $\pi^\sigma$ of $\pi$ for  $\sigma \in \Gal(\bar{\Q}_p/ \Q_p^{\rm unr})$,
to construct canonically an irreducible representation, say $\langle \pi \rangle$ 
over $\Q_p^{\rm unr}$. The representation
$\pi$ can be reduced modulo $\p$ and the representation $\langle \pi \rangle$ 
modulo $p$, and the semi-simplification of these reductions are related by
$$ \overline {\langle \pi \rangle}^{ss} \cong d \overline { \pi }^{ss},$$
where $d$ is the number of distinct Galois conjugates of $\pi$ under  $\Gal(\bar{\Q}_p/ \Q_p^{\rm unr})$.

Let $E$ be a Galois CM extension of a totally real number field $k$ with $F$ the totally
real subfield of $E$ with $[E:F] =2$, and $ G = \Gal(E/k)$.
Let $\tau$ denote the element of order 2 in the 
Galois group of $E$ over $F$. 

Let $H_E$ (resp. $H_{F}$) denote the class group of $E$ (resp. $F$).
Observe that the kernel of the natural map from $H_{F}$ to $H_E$ 
is a 2-group. (This follows from using the norm mapping from
$H_E$ to $H_{F}$.) Therefore since we are interested in $p$-primary
components for only odd primes $p$, $H_{F}$ can be considered to be
a subgroup of $H_E$, and the quotient $H_E/H_{F}$ becomes a $G$-module
of order $h_E/h_{F}$.

The following conjecture on the structure of the {\it minus-part} of the classgroup of $E$ (as a module for the Galois group $G$) 
is arrived at by considering the 
$p$-adic valuations of the two sides of the class number formula:
$$\zeta_E/\zeta_{F}(0)  = \prod_{\rho(\tau)=-1} L(0,\rho)^{\dim \rho} = \prod_{\rho(\tau)=-1} L(0,\langle \rho \rangle )^{\dim \rho} = 
\frac{h_E}{h_{F}}
\frac{1}{w_E},$$
with $E,F,k$ as above, and  the first product  taken 
over all irreducible representations $\rho$ of $G=\Gal(E/k)$ with values in
$\GL_n(\bar{\Q}_p) $, whereas the second one is over all irreducible representations $ \langle \rho \rangle $ of $G=\Gal(E/k)$ with values in
$\GL_n({\Q}_p^{\rm unr}) $. Since we are formulating the conjecture below based on equality of ($p$-adic valuations of)
  numbers in the class number formula, 
it is not sensitive to the subtlety  discussed earlier about 
$\chi$-eigencomponent in the classgroup of $\Q(\mu_p)$ being cyclic or not; all we care is their order.

\begin{conjecture} \label{classgroups}
Let  $E$ be a CM, Galois  extension of  a totally real number field $k$,  
with $F$ the totally real subfield of $E$, and $\tau \in \Gal(E/F)$, the nontrivial element of the Galois group. Let  
$\langle \rho \rangle : \Gal(E/k) \rightarrow \GL_{dn}({\Q}_p^{\rm unr}) $ be an irreducible, odd (i.e., $\rho(\tau) = -1$) 
representation of $\Gal(E/k)$ associated to an irreducible representation  
$\rho : \Gal(E/k) \rightarrow \GL_n(\bar{\Q}_p) $ as above,
with $\bar{\rho}$ the semi-simplification of the  reduction of $\rho$ mod $\p$ for $p$ 
an odd prime. Let $\omega_p: \Gal(E/k) \rightarrow (\Z/p)^\times$ be the action of $\Gal(E/k)$ on 
the $p$-th roots of unity in $E$ (so $\omega_p = 1$ if $\zeta_p \not \in E$). 
Then if $\bar{\rho} \not =  \omega_p^{-1}$ (for example if $\dim \rho > 1$), the representation $\langle \rho \rangle $ contributes 
 $v_p(L(0,\langle \rho \rangle ))$ many copies of $\bar{\rho}$ to  $\overline{H_E/H_{F}} [p]^{ss}$, 
with different $\langle \rho \rangle $'s contributing independently to $\overline{H_E/H_{F}} [p]^{ss}$, 
filling it up
except for the $\omega_p$-component. If $\omega_p \not = 1$, we make no assertion on the $\omega_p$-component in 
$\overline{H_E/H_{F}} [p]^{ss}$, 
but if $\omega_p =1$,
 there is no  $\omega_p$-component inside 
$\overline{H_E/H_{F}} [p]^{ss}$.

\end{conjecture}

\begin{remark} I should add that Ribet's theorem is specific to $\Q(\zeta_p)$ and although this section is
very general, it could also be specialized to a CM abelian extension $E$ of $\Q$, and the action of
the Galois group $\Gal(E/\Q)$ on the full class group of $E$. Since class group of an abelian extension 
is not totally obvious from the classgroup of the corresponding cyclotomic field $\Q(\zeta_n)$, 
even if we knew everything in the style of Ribet for $\Q(\zeta_n)$, presumably there is still some work left to be done, and not just book keeping (for $n$ which is a composite number)!
\end{remark}

\section{Integrality of  Abelian $L$-values for $\Q$}

The aim of this section is to prove certain 
results on integrality of $L(0,\chi)$ for $\chi$ an odd Dirichlet character of $\Q$ which go as first examples of all the integrality conjectures made in this paper. Although these are all well-known results, we have decided to give our proofs.

\begin{lemma} \label{simple}
For integers $m>1,n>1$, with $(m,n)=1$,
let $\chi = \chi_1 \times \chi_2$ be a primitive Dirichlet character on $(\Z/mn\Z)^\times = (\Z/m\Z)^\times \times  (\Z/n\Z)^\times$ with $\chi(-1)=-1$.  Then,
$$L(0,\chi) = -B_{1,\chi} = -\frac{1}{mn}\sum_{a=1}^{mn} a \chi(a),$$
is an algebraic integer, i.e., belongs to $\bar{\Z} \subset \bar{\Q}$.
\end{lemma}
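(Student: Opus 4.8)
The plan is to prove the stronger, symmetric pair of statements $nB_{1,\chi}\in\bar\Z$ and $mB_{1,\chi}\in\bar\Z$, and then combine them: since $\gcd(m,n)=1$ I can choose $u,v\in\Z$ with $um+vn=1$, and then $B_{1,\chi}=v\,(nB_{1,\chi})+u\,(mB_{1,\chi})$ lies in $\bar\Z$ as well. The point of departure is that the only possible denominators in $B_{1,\chi}=\frac{1}{mn}\sum_{a=1}^{mn}a\chi(a)$ come from primes dividing $mn$, because $\sum_{a=1}^{mn}a\chi(a)$ is already a $\Z$-combination of roots of unity and hence lies in $\bar\Z$. The hypothesis $\chi(-1)=-1$ is not really needed for integrality (for an even nontrivial $\chi$ one has $B_{1,\chi}=0$); the genuine content is that the conductor $mn$ is \emph{not} a prime power. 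Note also that $\chi_1$ and $\chi_2$ are primitive of conductors $m>1$ and $n>1$, hence \emph{nontrivial}, and this nontriviality is the engine of the proof.

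First I would slice the defining sum according to the residue of $a$ modulo $n$. Writing $a\equiv c\pmod n$ with $c$ ranging over $\{1,\dots,n\}$, the integers $a\in\{1,\dots,mn\}$ in this class are exactly $c,c+n,\dots,c+(m-1)n$, and on each of them $\chi_2(a)=\chi_2(c)$ is constant, so
$$\sum_{a=1}^{mn}a\chi(a)=\sum_{c=1}^{n}\chi_2(c)\sum_{k=0}^{m-1}(c+nk)\,\chi_1(c+nk).$$
Because $\gcd(m,n)=1$, as $k$ runs over $\{0,\dots,m-1\}$ the residues $c+nk\bmod m$ run over all of $\{0,\dots,m-1\}$. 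Writing $c+nk=d+m\,s_{c,d}$ with $d\in\{0,\dots,m-1\}$ the reduction and $s_{c,d}\in\Z$ the integer ``carry'', I would get
$$\sum_{k=0}^{m-1}(c+nk)\chi_1(c+nk)=\sum_{d=0}^{m-1}d\,\chi_1(d)+m\sum_{d=0}^{m-1}s_{c,d}\,\chi_1(d)=mB_{1,\chi_1}+m\sum_{d=0}^{m-1}s_{c,d}\,\chi_1(d),$$
using $\sum_{d=0}^{m-1}d\,\chi_1(d)=\sum_{d=1}^{m}d\,\chi_1(d)=mB_{1,\chi_1}$, since $\chi_1(0)=\chi_1(m)=0$.

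The crucial point is the cancellation at the next step. The term $mB_{1,\chi_1}$ is constant in $c$, so summing it against $\chi_2(c)$ produces $mB_{1,\chi_1}\sum_{c=1}^{n}\chi_2(c)=0$ because $\chi_2$ is nontrivial. This annihilates \emph{precisely} the one potentially non-integral contribution, leaving
$$mn\,B_{1,\chi}=\sum_{a=1}^{mn}a\chi(a)=m\sum_{c=1}^{n}\chi_2(c)\sum_{d=0}^{m-1}s_{c,d}\,\chi_1(d),$$
whence, cancelling the integer $m$, I conclude $nB_{1,\chi}=\sum_{c=1}^{n}\chi_2(c)\sum_{d=0}^{m-1}s_{c,d}\,\chi_1(d)\in\bar\Z$, a $\Z$-linear combination of roots of unity. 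Interchanging the roles of $m$ and $n$ gives $mB_{1,\chi}\in\bar\Z$, and the coprimality combination above finishes the argument. The main obstacle, such as it is, is purely bookkeeping: setting up the carries $s_{c,d}$ correctly and confirming that the vanishing of $\sum_{c}\chi_2(c)$ is exactly the mechanism that removes the $B_{1,\chi_1}$ term. Conceptually there is no deeper difficulty — in particular one never needs to evaluate the individual $s_{c,d}$, only to know that they are integers.
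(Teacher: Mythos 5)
Your proof is correct, and it takes a genuinely different route from the paper's. The paper keeps the sum $\frac{1}{mn}\sum a\chi(a)$ intact but replaces the representatives $\{1,\dots,mn\}$ by Chinese-remainder product representatives $\bar a\cdot\bar b$ (with $\bar a\equiv a\bmod m$, $\bar a\equiv 1\bmod n$, etc.), so that the Bernoulli sum factorizes, up to elements of $\bar\Z$, as $B_{1,\chi_1}\cdot B_{1,\chi_2}$; it then invokes the oddness of $\chi$ to know that one factor, say $\chi_2$, is even, and kills that factor outright via the reflection $b\mapsto n-b$, which gives $2B_{1,\chi_2}=\sum_b\chi_2(b)=0$. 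You instead slice the original sum by residues mod $n$, peel off the single dangerous term as $mB_{1,\chi_1}\sum_c\chi_2(c)$, and annihilate it using only the nontriviality of $\chi_2$ (orthogonality), obtaining the stronger intermediate statements $nB_{1,\chi}\in\bar\Z$ and, by symmetry, $mB_{1,\chi}\in\bar\Z$, which you then splice together with B\'ezout. Your version buys three things: it never uses $\chi(-1)=-1$, so it proves integrality for every primitive character of non-prime-power conductor; it produces the two divisibility statements separately, which is finer information; and it sidesteps the delicate ``product of two quantities each known only up to $\bar\Z$'' bookkeeping implicit in the paper's displayed relation $(\star)$ (where the error term a priori involves $B_{1,\chi_1}$ times an algebraic integer, so some additional care is needed there). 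What the paper's route buys in exchange is the conceptually suggestive approximate factorization $B_{1,\chi}\approx B_{1,\chi_1}B_{1,\chi_2}$, which it reuses in the subsequent prime-power lemma where the analogous second factor does \emph{not} vanish and must be analyzed directly.
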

\begin{proof}Observe that $B_{1,\chi} = \frac{1}{mn}\sum_{a=1}^{mn} a \chi(a),$ 
has a possible fraction by $mn$, and that 
in this sum over $a \in \{1,2,\cdots, mn\}$, if we instead sum over an arbitrary set $A$ of 
integers which have these residues mod $mn$, then $\frac{1}{mn}\sum_{a \in A} a \chi(a),$ 
will differ from $B_{1,\chi}$ by an
integral element (in $\bar{\Z}$). Since our aim is to prove that $B_{1,\chi}$ is integral,  it suffices to prove that 
$\frac{1}{mn}\sum_{a \in A} a \chi(a)$ is integral for some set of representatives $A \subset \Z$ of residues mod $mn$.

For an integer $a \in \{1,2,\cdots, m\}$, let $\bar{a}$ be an arbitrary  integer whose reduction mod $m$ is $a$, and 
whose reduction mod $n$ is 1. Similarly, for an integer $b \in \{1,2,\cdots, n\}$, let $\bar{b}$ be an arbitrary  integer whose reduction mod $n$ is $b$ and whose reduction mod $m$ is 1. Clearly, the set of integers $\bar{a} \cdot \bar{b}$ 
represents --- exactly once --- each residue class mod $mn$, and that $\bar{a} \cdot \bar{b}$ as an element in $\Z$ 
goes to the pair $(a,b) \in \Z/m \times \Z/n$. 
(It is important to note that $\bar{a} \cdot \bar{b}$ as an element in $\Z$ is {\it not} congruent to ${ab}$ mod $mn$, and therein lies a subtlety in the Chinese remainder theorem: there is no simple inverse to the natural isomorphism: $\Z/mn \rightarrow  \Z/m \times \Z/n$.)

By definition of the character $\chi$, $\chi(\bar{a} \cdot \bar{b}) = \chi_1(a) \chi_2(b)$. It follows that,
 $$ \frac{1}{mn}\sum \bar{a}\bar{b} \chi(\bar{a}\cdot \bar{b}) 
-  \left [\frac{1}{m}\sum_{a=1}^{m} a \chi_1(a) \right ] \cdot 
\left [\frac{1}{n}\sum_{b=1}^{n} b \chi_2(b) \right ] \in \bar{\Z}. \quad \quad (\star)$$

Since the character $\chi$ is odd, one of the characters, say $\chi_2$ is even (and $\chi_1$ is odd).

Observe that, $$B_{1,\chi_2} = \frac{1}{n}\sum_{b=1}^{n} b \chi_2(b) = \frac{1}{n}\sum_{b=1}^{n} (n-b) \chi_2(b)  .$$ 
 It follows that, 
$$\frac{2}{n}\sum_{b=1}^{n} b \chi_2(b) = \sum_{b=1}^{n}  \chi_2(b) =0  ,$$ 
where the last sum is zero because the character $\chi_2$ is assumed to be non-trivial.

Since  $$ \frac{1}{mn}\sum \bar{a}\bar{b} \chi(\bar{a}\cdot \bar{b}) - \frac{1}{mn}\sum_{c=1}^{mn} c \chi(c) \in \bar{\Z},$$ 
by the equation $(\star)$, it follows that:
 $$ \frac{1}{mn}\sum_{c=1}^{mn} c \chi(c) \in \bar{\Z},$$
as desired.\end{proof}

\begin{lemma}
For $p$ a prime, 
let $\chi $ be a primitive Dirichlet character on $(\Z/p^n\Z)^\times$
with $\chi(-1)=-1$.  
Write $(\Z/p^n\Z)^\times = (\Z/p\Z)^\times \times  (1+p\Z/1+p^n\Z)$, and
the character $\chi $ as $\chi_1 \times \chi_2$ with respect to this decomposition. 
Then,
$$L(0,\chi) = -B_{1,\chi} = -\frac{1}{p^n}\sum_{a=1}^{p^n} a \chi(a),$$
is an algebraic integer, i.e., belongs to $\bar{\Z} \subset \bar{\Q}$ if and only if $\chi_1 \not = \omega_p^{-1}$.
\end{lemma}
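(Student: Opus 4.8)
The plan is to treat the two implications separately, because only the integrality direction follows from a congruence, while the converse requires an exact evaluation. First I would reduce everything to the single prime $\p$. Since $B_{1,\chi} = \frac{1}{p^n}\sum_{a=1}^{p^n} a\chi(a)$ has numerator an algebraic integer (an integer combination of roots of unity), the only possible denominator is a power of $p$; hence $L(0,\chi)=-B_{1,\chi}\in\bar{\Z}$ if and only if $v_\p(B_{1,\chi})\geq 0$. I would then fix notation by writing $\chi_1=\omega^{j}$, so that $\chi_1=\omega^{-1}$ means $j\equiv -1 \pmod{p-1}$; note that $\chi(-1)=-1$ forces $j$ odd and $\chi_2$ even, and that $\chi_2$ takes values in $p$-power roots of unity, all $\equiv 1 \bmod \p$.

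For the implication $\chi_1\neq\omega^{-1}\Rightarrow L(0,\chi)\in\bar{\Z}$, I would use an elementary Stickelberger-type congruence, in the same spirit as the manipulation in the previous lemma. For any integer $c$ with $p\nmid c$, reindexing the defining sum by $a\mapsto ca$ and writing $ca = f\lfloor ca/f\rfloor + (ca\bmod f)$ with $f=p^{n}$ gives
$$(1 - c\,\chi(c))\,B_{1,\chi} = -\chi(c)\sum_{a=1}^{f}\chi(a)\left\lfloor \frac{ca}{f}\right\rfloor \in \bar{\Z}.$$
Reducing the scalar $1-c\chi(c)$ modulo $\p$, and using $\omega(c)\equiv c$ together with $\chi_2(c)\equiv 1$, I get $\chi(c)\equiv c^{\,j}\pmod{\p}$, hence $1 - c\chi(c)\equiv 1 - c^{\,1+j}$. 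When $j\not\equiv -1\pmod{p-1}$ the power map $c\mapsto c^{1+j}$ on $(\Z/p)^\times$ is nontrivial, so I may pick $c$ coprime to $p$ with $c^{1+j}\not\equiv 1 \pmod p$; then $1-c\chi(c)$ is a $\p$-adic unit and the displayed relation forces $v_\p(B_{1,\chi})\geq 0$.

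For the converse $\chi_1=\omega^{-1}\Rightarrow L(0,\chi)\notin\bar{\Z}$, the same congruence is vacuous, since $j\equiv -1$ gives $1-c\chi(c)\equiv 1-c^{0}=0\pmod{\p}$ for every $c$. Here I would invoke the exact evaluation instead. For $n\geq 2$, Lemma 4 of Chapter 10 of \cite{Lang} yields $B_{1,\chi}=u/(\chi(1+p)-1)$ with $u$ a $p$-adic unit; as $\chi$ is primitive of conductor $p^{n}$ and $1+p\equiv 1\pmod p$, the value $\chi(1+p)=\chi_2(1+p)$ is a primitive $p^{n-1}$-th root of unity, so $\chi(1+p)-1$ is a uniformizer at $\p$ and $v_\p(B_{1,\chi})=-v_\p(\chi(1+p)-1)<0$. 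For $n=1$, where $\chi=\omega^{-1}$, a direct computation suffices: $a\,\omega^{-1}(a)\equiv 1\pmod{\p}$ for each $a$, so $\sum_{a=1}^{p-1}a\,\omega^{-1}(a)\equiv -1\pmod{\p}$ is a unit and $v_\p(B_{1,\omega^{-1}})=v_\p(1/p)<0$.

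The main obstacle is precisely this converse: the congruence method is inherently one-sided, certifying integrality but never exhibiting a pole, so non-integrality genuinely requires pinning down the leading $\p$-adic term. I would handle it via Lang's closed formula as above; alternatively, for a self-contained argument one can group the defining sum by residues modulo $p^{n-1}$ and apply the identity $\sum_{k=0}^{p-1}k\,\zeta^{k}=p/(\zeta-1)$ for a primitive $p$-th root $\zeta$ to rewrite $B_{1,\chi}$ as a sum of terms $\chi(a_0)/(\zeta_{a_0}-1)$, each of valuation $-1$, and then verify that their polar contributions fail to cancel exactly when $\chi_1=\omega^{-1}$.
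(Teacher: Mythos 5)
Your proposal is correct, but the route you take for the main (integrality) direction is genuinely different from the paper's. The paper isolates the $p$-part by the same Chinese-remainder-style splitting used in its previous lemma, showing that modulo $\bar{\Z}$ one has
$$\frac{1}{p^n}\sum_{a=1}^{p^n} a\chi(a) \equiv \Bigl[\frac{1}{p}\sum_{a=1}^{p} a\chi_1(a)\Bigr]\cdot\Bigl[\frac{1}{p^{n-1}}\sum_{b=1}^{p^{n-1}}(1+bp)\chi_2(1+bp)\Bigr],$$
and then evaluates the second bracket in closed form as a geometric series, $\frac{1}{p^{n-1}}\cdot\frac{1-(1+p)^{p^{n-1}}}{1-\alpha(1+p)}$ with $\alpha=\chi_2(1+p)$, whose valuation is read off from $(1+p)^{p^{n-1}}\equiv 1 \bmod p^n$ and from $1-\alpha(1+p)$ being $-p$ or a uniformizer of $\Q_p(\zeta_{p^d})$; the first bracket is a unit times $1/p$ precisely when $\chi_1=\omega^{-1}$ and integral otherwise. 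You instead use the classical Stickelberger-type relation $(1-c\chi(c))B_{1,\chi}\in\bar{\Z}$ together with $\chi_2(c)\equiv 1$ and $\chi_1(c)\equiv c^j \bmod \p$, choosing $c$ with $c^{1+j}\not\equiv 1 \bmod p$. That argument is correct and is the standard textbook route; it is arguably cleaner and generalizes to arbitrary conductors, but, as you rightly note, it is one-sided and cannot produce the pole. For the converse you invoke Lang's Lemma 4 of Chapter 10 (which the paper itself quotes in its discussion of Conjecture 1) plus the direct $n=1$ computation $\sum a\,\omega^{-1}(a)\equiv -1 \bmod \p$; this is fine, and in fact more explicit than the paper, which merely says ``the same argument gives non-integrality; we omit the details.'' The paper's exact geometric-series evaluation has the advantage of handling both directions by one computation (it pins down the valuation $-1/(p^{n-2}(p-1))$ of $B_{1,\chi}$ in the non-integral case, which the paper uses later in Section 6), whereas your congruence method must be supplemented; conversely your method avoids the slightly delicate bookkeeping of representatives in the splitting step.
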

\begin{proof}
Assuming that $\chi_1 \not = \omega_p^{-1}$, we prove that $B_{1,\chi}$
belongs to $\bar{\Z} \subset \bar{\Q}$.

By an argument similar to the one used  in the previous lemma, it can be checked that,
 $$ \frac{1}{p^n}\sum_{a=1}^{p^n} {a} \chi({a}) 
-  \left [\frac{1}{p}\sum_{a=1}^{p} a \chi_1(a) \right ] \cdot 
\left [\frac{1}{p^{n-1}}\sum_{b=1}^{p^{n-1} } (1+bp) \chi_2(1+bp) \right ] \in \bar{\Z}. \quad \quad (\star).$$

If $\chi_1 \not = \omega_p^{-1}$, $\frac{1}{p}\sum_{a=1}^{p} a \chi_1(a)$ is easily seen to be integral. To prove the lemma, 
it then suffices to
prove that, $\left [\frac{1}{p^{n-1}}\sum_{b=1}^{p^{n-1}} (1+bp) \chi_2(1+bp) \right ] $ is integral.

Note the isomorphism of the additive group $\Z_p$ with the multiplicative group $1+p\Z_p$ by the map
$n\rightarrow (1+p)^n \in 1 + p\Z_p$. Let $\chi_2(1+p) = \alpha$ with $\alpha^{p^{n-1}}=1$. 

Then (the first and third equality below is up to $\bar{\Z}$),
\begin{eqnarray*}
\frac{1}{p^{n-1}}\sum_{b=1}^{p^{n-1} } (1+bp) \chi_2(1+bp) 
 & = & \frac{1}{p^{n-1}}\sum_{c=1}^{p^{n-1}} (1+p)^c \alpha^c  \\
& = & \frac{1}{p^{n-1}} 
\sum_{c=1}^{p^{n-1}} [\alpha(1+p)]^c  \\
& = & \frac{1}{p^{n-1}}  \frac{1 - [\alpha(1+p)]^{p^{n-1}}}{1 - \alpha(1+p)} \\
& = & \frac{1}{p^{n-1}}  \frac{[1 - (1+p)^{p^{n-1}}]}{[1 - \alpha(1+p)]}.
\end{eqnarray*}

Note that since $\alpha^{p^{n-1}}=1$ either $\alpha = 1$, or $1-\alpha$ 
is a uniformizer in $\Q_p(\zeta_{p^d})$ 
for some $d \leq n-1$. Therefore either $ -p = [1-\alpha(1+p)]$ if $\alpha =1$,
or $[1-\alpha(1+p)]$ is a uniformizer in $\Q_p(\zeta_{p^d})$. Finally, it suffices to observe that,

$$(1+p)^{p^{n-1}} \equiv 1 \bmod p^{n},$$
hence, $\frac{1}{p^{n-1}}\sum_{b=1}^{p^{n-1}} (1+bp) \chi_2(1+bp) $ is integral.

If $\chi_1 = \omega_p^{-1}$, the same argument gives non-integrality; we omit the details.
\end{proof}

The following proposition follows by putting the previous two lemmas together, and making an argument similar to what 
went into the proof of these two lemmas. We omit the details. 
\begin{proposition} \label{prop1}
Primitive Dirichlet characters $\chi: (\Z/n)^\times \rightarrow \bar{\Z}_p^\times$ for which 
$L(0,\chi)$ does not belong to $\bar{\Z}_p$ are exactly those for which:

\begin{enumerate}
\item $n = p^d$.

\item $\chi = \omega_p^{-1}$ mod $\p$.
\end{enumerate}
 
\end{proposition}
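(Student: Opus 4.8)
The plan is to deduce the proposition by assembling Lemma 1 and Lemma 2, after first isolating the $p$-part of the conductor. The organizing observation is that $L(0,\chi) = -\frac{1}{n}\sum_{a=1}^{n} a\chi(a)$ is a $\frac{1}{n}$-multiple of an element of $\bar{\Z}$ (an honest $\Z$-combination of roots of unity), so the \emph{only} possible source of failure of $\p$-integrality is the power of $p$ dividing $n$. In particular, if $p \nmid n$ then $\frac{1}{n} \in \Z_p^\times$ and $L(0,\chi) \in \bar{\Z}_p$ automatically; this already forces condition~(1) in the ``only if'' direction. I would also first dispose of even characters: if $\chi(-1)=+1$ with $\chi$ nontrivial, the substitution $a \mapsto n-a$ gives $\frac{2}{n}\sum_a a\chi(a) = \sum_a \chi(a) = 0$, so $L(0,\chi)=0\in\bar{\Z}_p$, while the trivial character gives $\zeta(0)=-1/2\in\Z_p$ as $p$ is odd. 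Hence a non-$\p$-integral value forces $\chi(-1)=-1$, which is exactly the standing hypothesis of Lemmas 1 and 2 (and conversely condition~(2) forces oddness, since $\omega^{-1}(-1)=-1$).

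Next, write $n = p^d m$ with $p \nmid m$ and $d \geq 1$ (the remaining case), and factor $\chi = \psi \times \eta$ along $(\Z/n)^\times = (\Z/p^d)^\times \times (\Z/m)^\times$; primitivity of $\chi$ makes $\psi$ primitive mod $p^d$ and $\eta$ primitive mod $m$, so both are nontrivial in their respective factors. If $m>1$, then the two moduli $p^d,m$ are coprime and both exceed $1$, so Lemma 1 applies verbatim (with its ``$m,n$'' read as ``$p^d,m$'') to give $L(0,\chi) \in \bar{\Z} \subset \bar{\Z}_p$: concretely, exactly one of $\psi,\eta$ is even and nontrivial, and the Chinese-remainder splitting $(\star)$ of Lemma 1 expresses $B_{1,\chi}$, up to $\bar{\Z}$, as $B_{1,\psi}B_{1,\eta}$, in which the even factor vanishes. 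Thus non-integrality forces $m=1$, i.e. $n = p^d$, which is condition~(1).

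When $n = p^d$ I would invoke Lemma 2 directly, so all that remains is to identify its dichotomy $\chi_1 \neq \omega^{-1}$ with condition~(2). Since the component $\chi_2$ on $1+p\Z$ has $p$-power order, its values are $p$-power roots of unity and reduce to $1$ modulo $\p$; therefore $\chi \bmod \p$ is a character of $(\Z/p^d)^\times$ into $\bar{\F}_p^\times$, which has no $p$-torsion, hence factors through $(\Z/p)^\times$ and equals the reduction of the prime-to-$p$ component $\chi_1$. As $\omega^{-1}$ likewise reduces injectively on $(\Z/p)^\times$, the condition $\chi \equiv \omega^{-1} \bmod \p$ is equivalent to $\chi_1 = \omega^{-1}$, which is condition~(2). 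Lemma 2 then yields $L(0,\chi)\in\bar{\Z}_p$ precisely when $\chi_1 \neq \omega^{-1}$, giving both directions.

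The one point requiring care — and the step I expect to be the main obstacle — is that Lemma 2 is phrased in terms of membership in $\bar{\Z}\subset\bar{\Q}$, whereas the proposition asks about $\bar{\Z}_p$, so I must check that the non-integrality produced when $\chi_1=\omega^{-1}$ is genuinely a pole \emph{at $\p$}. Here I would note that Lemma 2's computation is entirely $p$-adic: it tracks $v_{\p}$ of the factor $\frac{1}{p^{n-1}}\cdot\frac{1-(1+p)^{p^{n-1}}}{1-\alpha(1+p)}$, whose numerator has valuation $n$ and whose denominator is either $-p$ or a uniformizer of $\Q_p(\zeta_{p^d})$, so this factor has valuation $\geq 0$, while $B_{1,\chi_1}$ contributes valuation $-1$. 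The product therefore has strictly negative $\p$-valuation, and adding the $\bar{\Z}$-error from $(\star)$ cannot cancel it; thus ``non-integral'' in Lemma 2 already means $L(0,\chi)\notin\bar{\Z}_p$, as needed.
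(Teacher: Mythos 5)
Your proposal is correct and follows exactly the route the paper intends: the paper's own ``proof'' is the single sentence that the proposition ``follows by putting the previous two lemmas together,'' and you have carried out precisely that assembly, supplying the details it omits (the reduction to odd $\chi$, the application of the first lemma to the factorization $n=p^d m$ to rule out $m>1$, the identification of the condition $\chi\equiv\omega^{-1}\bmod\p$ with $\chi_1=\omega^{-1}$, and the $\p$-adic valuation count showing the non-integrality in the second lemma really occurs at $\p$). Nothing to correct.
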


The following consequence of the proposition suggests that prudence is to be exercised 
when discussing congruences of $L$-values
for Artin representations which are congruent.

\begin{cor}Let $p,q$ be odd primes with $p|(q-1)$. For any character $\chi_2$ of $(\Z/q\Z)^\times$ of order $p$, 
define the character 
$ \chi = \omega_p^{-1} \times \chi_2$ of $(\Z/pq\Z)^\times$. Then although the characters 
$\omega_p^{-1}$ and $\chi$ have the same reduction modulo $p$, 
$L(0,\omega_p^{-1})$ is $p$-adically non-integral whereas $L(0,\chi)$ is integral.
\end{cor}

\begin{question}
Let $\chi: (\Z/p^dm)^\times \rightarrow \bar{\Z}_p^\times$ with $(p,m)=1$, $m> 1$,
be a primitive Dirichlet character
 for which $\chi = \omega_p^{-1}$ mod $\p$ so that by Proposition \ref{prop1}, $L(0,\chi)$ is $\p$-integral. Is it possible to
have $L(0,\chi)=0$  modulo $\p$, the maximal ideal of $\bar{\Z}_p$? Our proofs in this section are `up to $\bar{\Z}$', so good to detect integrality, but not good for questions modulo $\p$. The question is relevant to conjecture \ref{classgroups} 
to see if the character $\omega_p$ appears in the classgroup  $H/H^+$ for $E= \Q(\zeta_{p^dm})$; 
such a character is known not to appear in the classgroup of   $H/H^+$ for $E= \Q(\zeta_{p^d})$. 
\end{question}

\section{Congruences   and their failure for $L$-values}

This paper considers integrality properties of certain Artin $L$-functions at 0. It may seem most natural 
 that if two such Artin representations $\rho_1, \rho_2: \Gal(\bar{\Q}/F) \rightarrow \GL_n(\bar{\Q}_p)$
 have the same semi-simplification mod-$p$ and do not contain the character $\omega_p^{-1}$, then
 $L(0,\rho_1)$ and $L(0,\rho_2)$
 which are in $\bar{\Z}_p$ by Conjecture 1,  have the same reduction mod-$p$.
 This is not true even in the simplest  case of Dirichlet characters for $\Q$. It is possible
 to fix this problem for abelian characters of $\Q$, and more generally for any totally real number field which is what this section
 strives to do, cf. Proposition \ref{general prop}. The recipe given in Proposition \ref{general prop} immediately suggests itself in the non-abelian case, but we have not spelt it out. 

 The problem that we find
 dealing with abelian characters $\chi_1,\chi_2$ is that they may be congruent for some prime, but may have
 different conductors in which case it is not the $L$-values $L(0,\chi_1)$ and $L(0,\chi_2)$ which are congruent, but a modified
 $L$-value, say $L_f(0,\chi)$  which gives the right congruence; these $L$-values are product of
 $\prod_\wp  (1-\chi(\wp))$ with $L(0,\chi)$
 where $\wp$ are all primes dividing either the conductor of $\chi_1$ or of $\chi_2$.  
 It appears to this author that this recipe of modifying the $L$-function, which amounts to
 dropping some Euler factors,  is not quite there in the literature (which
 is mostly about dropping the Euler factors above $p$).

 We begin with some elementary lemmas which go into congruences of $L$-values at 0 of Dirichlet characters, and
 then we consider totally real number fields. Most  of this section is  the outcome of discussions with
 G. Asvin  whom I thank heartily.

 \begin{lemma} \label{sim}   Let $V$ be a vector space over $\Q$, and $\chi = \chi_f: \Z/f \rightarrow V$
   be any function with the property that
  $\sum_{a=1}^f \chi(a)=0$. Let $\chi_{df}$ be the function on $\Z/df$ obtained
  from $\chi$ by composing with the natural map $\Z/df\rightarrow \Z/f$. Then
\begin{enumerate}
\item   $L(0, \chi)
 :=\frac{1}{f} \sum_{a=1}^{f}  a \chi(a)
  = \frac{1}{df} \sum_{a=1}^{df}  a \chi_{df}(a) := L(0, \chi_{df}).$
\item Let $\chi: (\Z/f)^\times \rightarrow \C^\times$ be a primitive character of conductor $f$ with  $\chi \not = 1$. Then for
  any $f|f'$, 
  $$ \frac{1}{f'} \sum_{ a=1,  (a,f')=1 }^{f'}  a \chi(a) =
  \prod_{p|f'}(1-\chi(p)) L(0, \chi).$$
\end{enumerate}
\end{lemma}
\begin{proof} Observe that,
  \begin{eqnarray*} \sum_{a=1}^{df}  a \chi_{df}(a)
    & = & \sum _{a=1}^{f} \sum_{i=0}^{d-1} (a+if) \chi_{df}(a+if) \\
    & = & \sum _{a=1}^{f} \sum_{i=0}^{d-1} (a+if) \chi(a) \\
    & = & \sum _{a=1}^{f}  \left [d a \chi(a) +\frac{fd(d-1)}{2} \chi(a) \right ] \\
      & = & d \sum _{a=1}^{f}a \chi(a),
  \end{eqnarray*}
  where in the last step we have used that $\sum_{a=1}^f \chi(a)=0$.
  The proof of part $(1)$ of the lemma follows.

  The proof of part $(2)$ will proceed in several steps, according to the value of $f'$. Observe first that
  if $f$ and $f'$ have the same prime divisors,
  then $(a,f)=1$ if and only $(a,f')=1$. Therefore,
  $$ \frac{1}{f'} \sum_{ a=1,  (a,f')=1 }^{f'}  a \chi(a) =
  \frac{1}{f'} \sum_{ a=1}^{f'}  a \chi_{f'}(a)
  = \frac{1}{f} \sum_{ a=1}^{f}  a \chi_{f}(a),$$
  where the second equality is a consequence of part (1) of the Lemma.
  In this case, i.e., when  $f$ and $f'$ have the same prime divisors, for all $p|f'$, $\chi(p)=0$. It follows
  that
$\prod_{p|f'}(1-\chi(p)) =1$, proving this case of part (2) of the Lemma.

Assume next that, $f'=fp^m$,
 $m \geq 1$, $p$ a prime with $(f,p)=1$.
 In this case    note that (using the notation on $L(0, \chi_{fp^m})$ introduced in part (1) of the Lemma),
 \begin{eqnarray*} L(0, \chi_{fp^m})
   & = & \frac{1}{fp^m} \sum_{ a=1,  (a,fp^m)=1 }^{fp^m}  a \chi(a) 
    + \frac{1}{fp^m} \sum_{i=1}^{fp^{m-1}}  (pi) \chi_{fp^{m-1}}(pi) \\
    & = &   \frac{1}{fp^m} \sum_{ a=1,  (a,fp^m)=1 }^{fp^m}  a \chi(a) + \chi(p) L(0, \chi_{fp^{m-1}}).
  \end{eqnarray*}

    Since by part (1) of the lemma, $L(0, \chi_{fp^m}) = L(0, \chi_{fp^{m-1}}),$ 
    the proof of part $(2)$ follows in this case.

    For general $f' = df''$, with $f''$ having the same prime divisors as $f$, and $d$ having prime divisors
which are coprime to those of $f$, let $    d=p_1^{m_1} \cdots p_r^{m_r}$.
    We argue by induction on $r$, thus assuming the result
    for $d_{r-1}= p_1^{m_1} \cdots p_{r-1}^{m_{r-1}}$,
    adding the prime power $p_r^{m_r}$ at the end which proves (2) for $d=p_1^{m_1} \cdots p_r^{m_r}$ using
    part (1), and part of (2) just proved for prime powers (to be used for $p_r^{m_r}$).
  \end{proof}

\begin{lemma} \label{cong}
  Let $\chi_1, \chi_2: (\Z/f)^\times \rightarrow \bar{\Z}_p^\times$
  be two (not necessarily primitive) characters. Consider $\chi_1,\chi_2$ as functions on $\Z/f$
  by declaring their values outside of $(\Z/f)^\times$ to be zero.
  Assume that the reduction mod $\wp$, 
  $\bar{\chi}_1, \bar{\chi}_2: (\Z/f)^\times \rightarrow \bar{\F}_p^\times$ are the same.
  If $p|f$, assume that neither of the $\bar{\chi}_1, \bar{\chi}_2: (\Z/f)^\times \rightarrow \bar{\F}_p^\times$
  factors through  $(\Z/p)^\times \rightarrow \bar{\F}_p^\times$
  to give $\omega_p^{-1}$
  where $\omega_p$ is the natural map from $(\Z/p)^\times$  to ${\F}_p^\times$.
    Then,
    $L_f(0, \chi_1)  := \frac{1}{f} \sum_{a=1}^{f}  a \chi_1(a)$
    and  $L_f(0, \chi_2)
  := \frac{1}{f} \sum_{a=1}^{f}  a \chi_2(a)$ are   in $\bar{\Z}_p$, and have the same reduction to  $ \bar{\F}_p$.
  \end{lemma}

\begin{proof}
By the hypothesis in the lemma,
there is a   $b \in (\Z/f)^\times$ such that  $[b\chi_1(b)-1] \in \bar{\Z}_p^\times$, and hence also
  $[b\chi_2(b)-1] \in \bar{\Z}_p^\times$. Fix such a $b \in (\Z/f)^\times$.

For $a \in \{1,\cdots, f\}$, write $$ab = [ab] + \lambda_a f,$$
with $[ab] \in \{1,\cdots, f\}$.

From the definition of $L_f(0,\chi_1)$,
\begin{eqnarray*}
  b\chi_1(b)L_f(0,\chi_1) & = & \frac{1}{f}\sum_{a=1}^{f} ab\chi_1(ab)
\end{eqnarray*}
As $b$ is invertible in $\Z/f$, $a\rightarrow [ab]$ is a bijection on $\{1,\cdots, f\}$,
therefore the above equation yields,
\begin{eqnarray*}
  [b\chi_1(b)-1]L_f(0,\chi_1) & = & \sum_{a=1}^{f} \lambda_a\chi_1(ab) \quad \quad \quad  \hfill(\star1).
\end{eqnarray*}
Similarly, 
\begin{eqnarray*}
  [b\chi_2(b)-1]L_f(0,\chi_2) & = & \sum_{a=1}^{a=f} \lambda_a\chi_2(ab)\quad \quad \quad \hfill(\star2).
\end{eqnarray*}
Since $\chi_1,\chi_2$ are congruent, the right hand sides of $(\star1), (\star2)$ are the same in $ \bar{\F}_p$,
and by the choice of $b$ made in the beginning of the proof of the lemma, $[b\chi_1(b)-1]$ as well as $[b\chi_2(b)-1]$ are 
in $ \bar{\F}_p^\times$, and are the same, thus it follows that $L_f(0,\chi_1)$ and $L_f(0,\chi_2)$ are in
$ \bar{\Z}_p$,
and are the same in $ \bar{\F}_p$.
  \end{proof}

\begin{proposition}
  Let $f_1, f_2$ be integers, and $f$ any integer divisible by both $f_1,f_2$. Suppose $\chi_1$ (resp. $\chi_2$) are primitive Dirichlet characters
  of conductors $f_1$ (resp. $f_2$) with values in $\bar{\Z}_p^\times$.
If $p|f$, assume that $\bar{\chi}_1, \bar{\chi}_2: (\Z/f)^\times \rightarrow \bar{\F}_p^\times$
  do  not factor through  $(\Z/p)^\times \rightarrow \bar{\F}_p^\times$
  to give $\omega_p^{-1}$
  where $\omega_p$ is the natural map from $(\Z/p)^\times$ to $ {\F}_p^\times$.
  Then $L_f(0, \chi_1)  := \frac{1}{f} \sum_{a=1}^{f}  a \chi_1(a)$ (where $\chi_1$ is considered as a function on $\Z/f$ zero outside
  $(\Z/f)^\times$)   has the value given by:
  $$L_f(0, \chi_1) = \prod_{p|f}  (1-\chi_1(p)) \cdot L(0,\chi_1);$$
  similarly for  $L_f(0, \chi_2)$. Both $L(0,\chi_1)$ and $L(0,\chi_2)$ are 
$\bar{\Z}_p$, and   if $\chi_1$ and $\chi_2$ are congruent modulo the maximal ideal in $\bar{\Z}_p$,
    so is the case for $L_f(0,\chi_1)$ and $L_f(0,\chi_2)$.
  \end{proposition}

\begin{proof} $L_f(0, \chi_1)  := \frac{1}{f} \sum_{a=1}^{f}  a \chi_1(a)$ has the value as asserted in the proposition
  by part (2) of Lemma \ref{sim}, and their
  congruence holds by Lemma \ref{cong}.
  \end{proof}
\begin{cor}
  If $\chi_1$ and $\chi_2$ have conductors $f_1$ and $f_2$ such that the prime divisors of $f_1$ and $f_2$ are the same, then
  for $f$ which is the least common multiple of $f_1$ and $f_2$, $L_f(0,\chi_1) = L(0, \chi_1)$ and $L_f(0,\chi_2) = L(0,\chi_2)$,
  hence if $\chi_1$ and $\chi_2$ are congruent, so are $L(0,\chi_1)$ and $L(0,\chi_2)$. On the other hand, suppose $f_1=p$, $f_2=pq$,
  $\chi_2= \chi_1 \times \alpha: (\Z/p)^\times \times (\Z/q)^\times = (\Z/pq)^\times \rightarrow \bar{\Z}_p^\times$ with $\bar{\alpha}=1$,
    then,
    \begin{eqnarray*} L_{pq}(0, \chi_2) & = & L(0,\chi_2) \\
      L_{pq}(0, \chi_1)
      & = & (1-\chi_1(q))L(0,\chi_1).
    \end{eqnarray*}
    Since $L_{pq}(0, \chi_1)$ and $L_{pq}(0, \chi_2)$ are congruent mod $\wp$,  we find that if $L(0, \chi_1)$ and $L(0, \chi_2)$
    are not both zero  mod $\wp$, they cannot be the same mod $\wp$ since $(1-\chi_1(q))$ cannot be 1 mod $\wp$.  
  \end{cor}

\begin{remark}
  The hypothesis that the reduction mod $\wp$ of $\chi: (\Z/f)^\times \rightarrow \bar{\Z}_p^\times$ does not factor through
  $(\Z/p)^\times \rightarrow \bar{\F}_p^\times$ to give $\omega_p^{-1}$ is stonger than what is required for $p$-integrality
  of $L(0,\chi)$. For example, by Lemma \ref{simple}, $L(0,\chi)$ is integral if the conductor of $\chi$ has two distinct prime factors,
  say $f=pq$, with $(\Z/f)^\times = (\Z/p)^\times \times (\Z/q)^\times$, and $\chi = \alpha \times \beta$. In Lemma \ref{cong} we would
  excluding characters $\chi$ for which $\bar{\alpha} = \omega_p^{-1}$, and $\bar{\beta} = 1$. 
\end{remark}

Here is the more general version of the previous proposition.

\begin{proposition} \label{general prop}Let $F$ be a totally real number fields with $G = \Gal(\bar{\Q}/F)$ its Galois group. For
  $\chi: G\rightarrow \bar{\Z}_p^\times$,  a character of finite order of $G$, which is also to be
  considered as a character o the groups of ideals coprime to a nonzero ideal $\wf$ in $K$ by classfield theory (so $\wf$ is divisible by the conductor
  of $\chi$, but may not be the conductor of $\chi$).
  Let $L(s,\chi)$ be the `true' $L$-function associated to the character $\chi$, and define $L_{\wf}(s,\chi)$ by
  $$L_{\wf}(s,\chi) = \sum _{ (\wa,\wf)=1} \frac{\chi(\wa)}{(N\wa)^s},$$
  where $N\wa$ denotes the norm of an integral ideal $\wa$ in $F$. Then,

  \begin{enumerate} \item $L_{\wf}(s, \chi) = \prod_{\wp|\wf}  (1-\frac{\chi(\wp)}{(N\wp)^{s}}) \cdot L(s,\chi_1).$

\item For any integral
  ideals $\wc$ in $F$ coprime to $p\wf$, integers $k\geq 1$, 
  $$\Delta_{\wc}(1-k, \chi) = (1-\chi(\wc)N\wc^k)L_\wf(1-k,\chi),$$
  are in $\bar{\Z}_p$.

  \item If $\chi_1$ and $\chi_2$ are two characters of $G$ with values in $\bar{\Z}_p^\times$
  with conductors dividing $\wf$, 
  such that neither of the two  reductions  $\bar{\chi}_1,\bar{\chi}_2: G \rightarrow \bar{\F}_p^\times$ is $\omega_p^{-1}$,
  then $L_{\wf}(0,\chi_1)$ and $L_{\wf}(0,\chi_2)$ are in $\bar{\Z}_p$,
and   if $\chi_1$ and $\chi_2$   are congruent modulo the maximal ideal in $\bar{\Z}_p$,
  so is the case for $L_{\wf}(0,\chi_1)$ and $L_{\wf}(0,\chi_2)$.
  \end{enumerate}
  \end{proposition}
\begin{proof}
    Deligne and Ribet, cf. Theorem 0.4 of \cite{DR}, see also Theorem 2.1 of
  \cite{DR2}, and Proposition 1.4 of \cite{DR2}, prove integrality of $\Delta_{\wc}(1-k, \chi)$,
  as well as the congruence between $\Delta_{\wc}(1-k, \chi_1)$ and $\Delta_{\wc}(1-k, \chi_2)$. 

  The final congruence between  $L_{\wf}(0,\chi_1)$ and $L_{\wf}(0,\chi_2)$ follows as in Lemma \ref{cong}
  by choosing an integral ideal $\wc$ coprime to $p\wf$ such that $(1-\chi(\wc)N\wc)$ is a unit in $\bar{\Z}_p$
  which follows just as in Lemma \ref{cong} for $\chi=\chi_1$ (hence for $\chi=\chi_2$ too),
  because $\bar{\chi}_1: G \rightarrow \bar{\F}_p^\times$ is not $\omega_p^{-1}$ (see beginning of section 2 of \cite{DR2}
  how the `norm map'  under the identification of characters on ideals coprime to $\wf$ to characters of $G$ becomes action
  of $G$ on $p$-power roots of identity).
\end{proof}

\vspace{ 1 cm}
\noindent{\bf Acknowledgement:} The 
author thanks P. Colmez for suggesting that the questions posed here are not as outrageous as one might think, 
and for even suggesting that some of the conjectures above should have an affirmative answer as a consequence 
of the Main conjecture of Iwasawa theory for totally real number fields 
proved by A. Wiles \cite{Wiles} if one knew the vanishing of the $\mu$-invariant (which is a 
conjecture of Iwasawa proved for abelian extensions of $\Q$ by Ferrero-Washington). I also thank U.K. Anandavardhanan, C. Dalawat, C. Khare for their comments and their encouragement. 
I am particularly grateful to G. Asvin  for his help with the last section.

The final writing of this work was supported by a grant of
the Government of the Russian Federation
for the state support of scientific research carried out
under the supervision of leading scientists,
agreement 14.W03.31.0030 dated 15.02.2018.

\vspace{1cm}

\noindent
{Tata Insititute of Fundamental Research,
Colaba,
Mumbai-400005, India.}

\noindent{Laboratory of Modern Algebra and Applications,
Saint-Petersburg State University,
Russia}

\noindent{Email: prasad.dipendra@gmail.com}

\end{document}